\newtheorem{thm}{Theorem}[section]
\newtheorem{lem}[thm]{Lemma}
\theoremstyle{definition}
\newtheorem{rem}[thm]{Remark}
\numberwithin{equation}{section}
\newcommand{\QQ}{\mathbb Q}
\newcommand{\ZZ}{\mathbb Z}
\newcommand{\CC}{\mathbb C}
\newcommand{\PP}{\mathbb P}
\newcommand{\lra}{\longrightarrow}
\newcommand{\ra}{\rightarrow}
\newcommand{\cL}{\mathcal{L}}
\newcommand{\cF}{\mathcal{F}}
\newcommand{\cH}{\mathcal{H}}
\newcommand{\tv}{\tilde{\varphi}}
\newcommand{\cG}{\mathcal{G}}
\newcommand{\cM}{\mathcal{M}}
\newcommand{\cO}{\mathcal{O}_C}
\newcommand{\cP}{\mathcal{P}}
\DeclareMathOperator{\End}{{End}}
\DeclareMathOperator{\Pic}{Pic}
 \DeclareMathOperator{\im}{Im}
 \DeclareMathOperator{\Div}{{Div}}
\begin{document}

\title[ ]{ The Brill-Noether curve and Prym-Tyurin varieties}%
\author{ Angela  Ortega}
              
\address{Angela Ortega \\ Institut f\" ur Mathematik, Humboldt Universit\"at zu Berlin \\ Germany}
\email{ortega@math.hu-berlin.de}

\keywords{Prym-Tyurin variety, Brill-Noether, secants to a projective curve}

\begin{abstract} 

We prove that the Jacobian of a general curve $C$ of genus $g=2a+1$, with $a\geq 2$, can be realized as 
a Prym-Tyurin variety for the Brill-Noether curve $W^1_{a+2}(C)$. As consequence of this result we are able to
compute the class of the sum of secant divisors of the curve $C$, embedded with a complete linear series
$g^{a-1}_{3a-2}$.

\end{abstract}

\maketitle

\section{Introduction}

Consider a smooth general curve $C$ of genus $g\geq 5$. The locus $W^r_d(C)$
parametrizing line bundles $L$ of degree $d$ over $C$ with $h^0(L)\geq r+1$, is an irreducible variety of
dimension equal to the Brill-Noether number $\rho=\rho(g,r,d)$. In particular,  $ W^r_d(C)$  is a smooth 
curve when $\rho=1$. In the case of $g=5$, $r=1$, the involution $\iota: L \mapsto \omega_C\otimes L^{-1}$, induces an automorphism of the curve $W^1_4(C)$, which is of genus 11. Since $C$ is general, the quotient map
$W^1_4(C) \ra W^1_4(C)/ \iota$ is an \'etale double covering over a curve of genus 6. If $P$ denotes the Prym 
variety associated to this covering, it is known that $P$ is isomorphic to the Jacobian $JC $ as a principally polarized
abelian variety (\cite{ma}). The main result of this paper shows that this situation generalizes to curves of higher odd genus, obtaining in this way Prym-Tyurin varieties. 
\vspace{.2cm}

Recall that a principally polarized abelian variety (ppav)  $(P,\Xi)$ is a Prym-Tyurin 
variety if there exists a smooth projective curve 
$X$, such that $P$ is an abelian subvariety of the Jacobian $JX$ and the restriction of the principal polarization of $JX$
to $P$ is algebraically equivalent to  $e \Xi$, where $e \in \ZZ_{>0}$ is the exponent of $P$ in $JX$. In that case, we say that $P$ is a Prym-Tyurin variety for the curve $X$ with exponent $e$. 
\vspace{.2cm}

Let $g=2a+1$, for $a\geq 2$. The locus $W^1_{a+2}(C)$ is a smooth curve, which from now on
will be called the {\it Brill-Noether curve}.  We define a correspondence $\gamma$ on $W:=W^1_{a+2}(C)$, hence an 
endomorphism of the Jacobian  $JW$, by means of the multiplication of sections. More precisely,
$$
L \mapsto \gamma(L):=\big\{ L' \in W \mid H^0( L) \otimes H^0(\omega_C\otimes (L')^{-1}) \ra H^0(\omega_C \otimes L\otimes (L')^{-1}) \textrm{ is not injective} \big\}.
$$
Let $P:=\im (1- \gamma) \subset JW^1_{a+2}(C)$. 
We prove the following theorem.
\begin{thm} \label{main}
Let $C$ be a general curve of genus $g=2a+1$. 
The subvariety $P:= \im (1-\gamma) $ is a Prym-Tyurin variety for the Brill-Noether curve $W^1_{a+2}(C)$
 of  exponent the Catalan number
 $$ 
\frac{(2a)!}{a! (a+1)!}. 
$$
Moreover,  $P \simeq JC$ as principally polarized abelian varieties.
\end{thm}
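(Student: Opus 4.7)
The plan is to apply the classical criterion of Kanev for Prym-Tyurin varieties: a symmetric correspondence $\gamma$ on a smooth projective curve $X$ whose induced endomorphism of $JX$ satisfies the quadratic equation $(\gamma - 1)(\gamma + (e-1)) = 0$ produces a Prym-Tyurin subvariety $P := \im(1-\gamma) \subset JX$ of exponent $e$. Before applying this criterion, I would give a workable geometric reformulation of $\gamma$. For a general $L \in W$, the pencil $|L|$ is base-point-free, so the base-point-free pencil trick identifies the kernel of the multiplication map $H^0(L) \otimes H^0(\omega_C (L')^{-1}) \to H^0(\omega_C L (L')^{-1})$ with $H^0(\omega_C L^{-1} (L')^{-1})$. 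Consequently $L' \in \gamma(L)$ if and only if there exists an effective divisor $D$ of degree $2a-4$ on $C$ with $L + L' + D \sim K_C$. Setting $M := \omega_C \otimes L^{-1} \in W^{a-1}_{3a-2}$ and embedding $C \hookrightarrow \PP^{a-1}$ via $|M|$, this says precisely that $L'$ is residual, inside some hyperplane section of $C$, to an effective divisor of degree $2a-4$. The symmetry of $\gamma$ in $L$ and $L'$ is manifest in this form, so the induced endomorphism of $JW$ is self-adjoint with respect to the principal polarization.

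The central task is then to establish the quadratic equation and to identify its coefficient with the Catalan number. I would compute $\gamma^2$ geometrically: $L'' \in \gamma^2(L)$ forces the existence of $L' \in W$ and effective divisors $D, D'$ of degree $2a-4$ with $L + L' + D \sim K_C \sim L' + L'' + D'$. Eliminating $L'$, the cycle $\gamma^2(L)$ is expressed as an enumerative invariant of iterated secant configurations of $C$ in $\PP^{a-1}$. Its class on $W$ should decompose, modulo classes pulled back from $\Pic^{a+2}(C)$ (which act trivially on $JW$), into an identity component $L'' = L$ and a $\gamma$-component $L'' \in \gamma(L)$, with multiplicities that are classical secant counts. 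The Catalan number $\binom{2a}{a}/(a+1)$ will appear as the degree of the relevant secant locus, reflecting its combinatorial interpretation as the count of non-crossing chord diagrams indexing the allowed configurations. As a sanity check, for $a=2$ the exponent is $C_2 = 2$, matching the involution case recalled in the introduction.

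Once the quadratic equation is in hand, Kanev's criterion yields that $P$ is a Prym-Tyurin variety for $W$ of the stated exponent. For the identification $P \simeq JC$ as a principally polarized abelian variety, the natural candidate is the homomorphism $\alpha^* : JC \to JW$ obtained by pulling back $\Pic^0$ along the embedding $\alpha : W \hookrightarrow \Pic^{a+2}(C) \simeq JC$. I would show $\im(\alpha^*) = P$ by verifying $(1-\gamma)\alpha^* = e\,\alpha^*$, a check that reduces, via the geometric reformulation of $\gamma$, to a universal computation with pencils and their incidence on $C$. Comparing dimensions $\dim P = g = \dim JC$ implies $\alpha^*$ is an isogeny onto $P$; the pullback of the principal polarization of $JW$ along $\alpha^*$ will be algebraically equivalent to $e\,\Theta_C$, which (combined with the exponent-$e$ restriction of the polarization to $P$) identifies $(P, \Theta_W|_P) \simeq (JC, e\,\Theta_C)$ and hence $P \simeq JC$ as ppav.

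The main obstacle, by a wide margin, is the verification of the quadratic equation $(\gamma-1)(\gamma+(e-1)) = 0$. The correspondence $\gamma$ is substantially more intricate than the involution encountered in the case $a=2$, and computing $\gamma^2$ on $W$ requires tight control of excess intersections, of multiplicities along degenerate secants, and of the combinatorics of iterated hyperplane sections. This is exactly where the generality of $C$ enters essentially: it ensures that $W$ is smooth, that all pencils $|L|$ are base-point-free, and that the Brill-Noether loci parametrizing the iterated secant configurations have the expected dimensions. The appearance of the Catalan number as exponent is the combinatorial fingerprint of this iterated secant calculation, and it is the same combinatorics that, once the theorem is proved, will yield the application to the class of the sum of secant divisors of $C \hookrightarrow \PP^{a-1}$ announced in the abstract.
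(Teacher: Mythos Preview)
Your route and the paper's diverge at the very first step. You aim to prove the quadratic equation $(\gamma-1)(\gamma+(e-1))=0$ directly, by computing $\gamma^2$ as an iterated secant count, and then invoke Kanev's criterion. The paper never attempts this computation. Instead it first shows, independently of $\gamma$, that $JC$ is a Prym--Tyurin subvariety of $JW$ of exponent $e$: the Abel map $\varphi\colon W\hookrightarrow JC$ has $\varphi^*\colon JC\to JW$ injective (Fulton--Lazarsfeld give surjectivity of $\varphi_*$ on $H_1$), and the Brill--Noether class formula $[W]=\tfrac{1}{a!(a+1)!}\bigwedge^{g-1}[\Theta_C]$ feeds directly into Welters' criterion to produce the Catalan exponent. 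The identification $P=\im(1-\gamma)\simeq \varphi^*(JC)$ is then forced by the structural theorem of Ciliberto--Harris--Teixidor~i~Bigas that $K^1_{a+2}(C)=\ker(\tilde\varphi)$ has no nontrivial rationally determined endomorphisms: $JW$ is isogenous to $JC\times K$ with $K$ simple, so $P$ must be isogenous to one of the two factors, and a dimension count (using the elementary identity $g(W)-\deg\gamma=eg$) rules out $P\sim K$. The quadratic equation then drops out \emph{a posteriori} as the norm--endomorphism relation $(1-\gamma)^2=e(1-\gamma)$.

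Your plan is not wrong in principle, but the step you flag as the ``main obstacle'' is genuinely the entire content, and your sketch does not indicate how to close it. Writing $\gamma^2(L)$ via pairs $(D,D')$ with $L+L'+D\sim K_C\sim L'+L''+D'$ gives, after eliminating $L'$, the condition $L''\sim L+D-D'$ with $h^0(K_C-L-D)\ge 2$; extracting from this a clean decomposition into an identity piece and a $\gamma$-piece, with the correct multiplicities and with all excess handled, is not a computation anyone knows how to do directly for general $a$. The appeal to ``non-crossing chord diagrams'' is suggestive but not a mechanism. Likewise, your proposed check $(1-\gamma)\alpha^*=e\,\alpha^*$ for the identification with $JC$ is essentially equivalent in difficulty to the quadratic equation itself. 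The paper's insight is precisely that the Ciliberto--Harris--Teixidor theorem lets one bypass all of this: once you know $JW$ has (rationally) only the obvious abelian subvarieties, any nontrivial symmetric idempotent-type endomorphism must cut out $JC$ or its complement, and distinguishing the two is a one-line numerical check.
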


This result can also be interpreted from the point of view of enumerative geometry.
It is reasonable to expect that, under suitable generality assumptions, a linear series $L \in W^r_d(C)$ has finitely many 
$(2r-2)$-secant $(r-2)$-planes that is, divisors $D \in C^{(2r-2)}$ such that $h^0(L(-D)) \geq 2$. 
In that case the number of secants is computed by the Castelnuovo formula (\cite[Chapter VIII]{acgh}).
Then one can associate to every linear series $g^r_d$ an element of $\Pic(C)$, namely the class of the sum of the secant divisors. 
By the results of Ciliberto (\cite{c}) it is natural to expect that this class should depend only on the canonical divisor and the $g^r_d$. 
For instance, when $\rho(g,r,d)=0$ one can assign to the curve the class of the sum of the elements in 
$W^r_d(C)$. In this situation, Franchetta's conjecture implies that the sum is a multiple of the canonical bundle (see \cite{b}). 
\vspace{.2cm}

For $a\geq 4$, the residual linear system of  $L \in W^{1}_{a+2}(C)$ defines an embedding $C \hookrightarrow \PP^{a-1}$,
whose image admits finitely many $(2a-4)$-secant  $(a-3)$-planes. These secants are in bijection with the elements
of $\gamma(L)$ by setting $L'= \omega_C\otimes L^{-1} (-D) \in \gamma(L) $, where $D $ is the divisor defined by a secant plane of the embedded curve. As an application of the Theorem \ref{main}, we are able to determine the class in 
$\Pic (C)$ of the sum of the secant divisors. 

\begin{thm} 
 Let $C$ be a general smooth curve of genus $2a+1$. For any line bundle $L \in W^1_{a+2}(C)$ we have that
 $$
\bigotimes_{L' \in \gamma(L)}  L' =  \omega_C^{\alpha} \otimes L^{1-e},
 $$ 
 where $e=\frac{(2a)!}{a! (a+1)!}$ and $\alpha= \frac{a+2}{4a}( g(W)-1 - e(g-1))$.
 \end{thm}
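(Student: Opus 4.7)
The plan is to leverage Theorem~\ref{main} to turn the Prym-Tyurin structure into a relation among endomorphisms of $JW$, and then push it forward via the Abel-Jacobi map to an identity in $\Pic(C)$. The Prym-Tyurin condition that $P = \im(1-\gamma_*) \subset JW$ has exponent $e$ is equivalent to $(1-\gamma_*)^2 = e(1-\gamma_*)$ on $JW$, i.e.\ to the quadratic relation $(\gamma_* - 1)(\gamma_* + e - 1) = 0$. In particular $\im(\gamma_* + e - 1) \subseteq \Ker(1-\gamma_*)^0 = P^c$, the complementary abelian subvariety of $P$. Let $u\colon W \hookrightarrow JC$ denote the Abel-Jacobi embedding (coming from $W \subset \Pic^{a+2}(C)$, translated by a fixed base point), with linearisation $u_*\colon JW \twoheadrightarrow JC$. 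By Theorem~\ref{main}, $u_*|_P\colon P \xrightarrow{\sim} JC$ is an isomorphism of principally polarized abelian varieties; the standard duality in Prym-Tyurin theory (using that the theta polarization of $JW$ restricts to $e$ times the principal polarization of $P \simeq JC$) then forces $\Ker u_* = P^c$.

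The sum-of-secants map $L \mapsto \bigotimes_{L' \in \gamma(L)} L'$ into $\Pic^{d_1(a+2)}(C)$ (with $d_1 = \#\gamma(L)$) extends to the homomorphism $u_* \circ \gamma_*\colon JW \to JC$, so the homomorphism associated to
\[
\Psi\colon W \longrightarrow \Pic(C),\qquad \Psi(L) = \Bigl(\bigotimes_{L' \in \gamma(L)} L'\Bigr) \otimes L^{e-1}
\]
is $u_* \circ (\gamma_* + e - 1)$. This vanishes by the containment above, so $\Psi$ is constant on $W$. Let $c \in \Pic^N(C)$ denote the constant value, with $N = (d_1 + e - 1)(a + 2)$. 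Since $c$ is defined intrinsically in terms of $C$ (the correspondence $\gamma$, and hence $\Psi$, is constructed purely from $C$ and varies functorially in families), a Franchetta-type argument applied to the universal Picard scheme over an open subset of $M_{2a+1}$ forces $c = \omega_C^\alpha$, and comparing degrees yields $4a\alpha = N$, i.e.\ $\alpha = (a+2)(d_1 + e - 1)/(4a)$.

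It remains to identify $d_1$. The quadratic relation implies $\gamma_*$ acts on $H^0(W, \omega_W)$ with eigenvalues $1$ (multiplicity $g(W) - g$) and $1 - e$ (multiplicity $g$), so $\tr(\gamma_* \mid H^0(\omega_W)) = g(W) - eg$. The base-point-free pencil trick identifies the kernel of the multiplication map as $H^0(\omega_C \otimes (LL')^{-1})$, visibly symmetric in $L$ and $L'$, so $\gamma$ is a symmetric correspondence; the Hurwitz-Weil trace formula, combined with the generic fixed-point freeness of $\gamma$ on $W$ (equivalent to $h^0(\omega_C \otimes L^{-2}) = 0$ for generic $L$, a codimension condition for generic $C$), gives $d_1 = g(W) - eg$. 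Substituting, $d_1 + e - 1 = g(W) - 1 - e(g-1)$, matching the formula in the statement. The main obstacle is pinning down the constant $c$ as a power of $\omega_C$: the linearisation argument alone yields only constancy in $\Pic(C)$, and extracting the precise form $\omega_C^\alpha$ requires moduli-theoretic input from Franchetta's conjecture (or an equivalent direct verification on a well-chosen degeneration).
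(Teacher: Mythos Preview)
Your argument is correct and shares the paper's central idea: the Prym--Tyurin structure of Theorem~\ref{main} yields the quadratic relation $(\gamma-1)(\gamma+e-1)=0$ on $JW$, which forces the exponent of $L$ in $\gamma(L)$ to be $1-e$. The implementations differ in two places worth noting. First, the paper invokes the Ciliberto--Harris--Teixidor theorem on rationally determined line bundles over the Hurwitz scheme (Theorem~\ref{linebundles}) to write \emph{a priori} $\gamma(L)=\omega_C^{\alpha}\otimes L^{\beta}$ and then solves $(1-\beta)(1-e-\beta)=0$; you instead establish $u_*\circ(\gamma_*+e-1)=0$ directly from the identification $P^c=\Ker u_*$, so that $\gamma(L)\otimes L^{e-1}$ is already independent of $L$ and only the weaker Franchetta theorem over $\cM_g$ is needed to recognise the constant as a power of $\omega_C$. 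Second, the paper records $\deg\gamma=g(W)-eg$ (Lemma~\ref{rel}) as a direct computation from the Castelnuovo and Eisenbud--Harris genus formulas, whereas you recover it conceptually from the Lefschetz trace formula and the eigenvalue decomposition of $\gamma_*$ on $H^0(\omega_W)$.

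Two minor points. Your claim $\Ker u_*=P^c$ is correct but not quite ``standard duality'': it follows because $P=\im\varphi^*$ (this is the content of Theorem~\ref{prym}), and the $\theta_W$-complement of $\im\varphi^*$ is then $(\Ker\tilde\varphi)^0$, which equals $K$ by the connectedness in Theorem~\ref{cht}. And for the trace computation you need $\gamma\cdot\Delta=0$, hence fixed-point freeness on \emph{all} of $W$, not merely for generic $L$; this is exactly Gieseker--Petri (the kernel of the Petri map for $L$ is $H^0(\omega_C\otimes L^{-2})$ by the base-point-free pencil trick), which the paper invokes explicitly.
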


A principally polarized abelian variety can always be realized as a Prym-Tyurin variety for some curve, 
but with a very large exponent  (see \cite[Corollary 12.2.4]{bl}). In fact, the curve  for which a ppav is a Prym-Tyurin variety is not uniquely determined. It is an open problem to find, for a fixed $g$,  the smallest integer $m$ such that any ppav of dimension 
$g$ is a Prym-Tyurin variety of exponent $e \leq m$.
For instance, Mumford's results (\cite{m}) show that the general ppav of dimension $g=4, 5$
is a Prym-Tyurin variety of exponent 2 for a curve of genus $2g-1$. Another example of 
Prym-Tyurin varieties of small exponent can be found in \cite{lr}, where the authors exhibit
a family of Prym-Tyurin varieties of dimension 6 and exponent 6.

\section{The Brill-Noether curve}

Let $C$ be a general curve of genus $g = 2a+1$ satisfying Petri's theorem. Let $\omega_C$ denote the canonical line
bundle on $C$. Consider the Brill-Noether locus $W:=W^1_{a+2}(C)$ consisting of line bundles $L$ on $C$ 
of degree $a+2$, with $h^0(C,L)\geq 2$.  Since $C$ is general and $\rho(g, 1, a+2) =1$, the locus $W$ is a smooth irreducible curve naturally embedded in $\Pic^{a+2}(C)$.  The genus of the Brill-Noether curve is computed by the formula (\cite[Theorem 4]{eh}):
\begin{equation} \label{genus}
g(W) = \frac{a}{a+2}\cdot \frac{2g!}{ a! (a+1)!} +1.
\end{equation}
We fix a point  $ L_0 \in \Pic^{a+2}(C)$ and consider the embedding 
$$
\varphi : W \ra JC,  \quad L\mapsto L\otimes L_0^{-1}.
$$ 
\begin{lem}
The curve $W$ generates $JC$ as an abelian group.
\end{lem}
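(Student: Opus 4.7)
My approach is to reduce the statement to the classical fact that the Jacobian of a general curve is a simple abelian variety, i.e.\ admits no non-trivial abelian subvarieties. Granted this, any positive-dimensional closed subvariety of $JC$ passing through the origin automatically generates $JC$ as an abelian group, so the lemma is nearly automatic.

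Concretely, I would first observe that $\varphi$ is a closed embedding: the inclusion $W \hookrightarrow \Pic^{a+2}(C)$ is an embedding by construction, and $\varphi$ is obtained from it by composing with the translation isomorphism $\Pic^{a+2}(C) \simeq JC$ given by $L \mapsto L \otimes L_0^{-1}$. Hence $\varphi(W)$ is a smooth irreducible curve in $JC$. I would then consider the difference subvariety
$$D := \{L_1 \otimes L_2^{-1} : L_1, L_2 \in W\} \subset JC,$$
obtained as the image of the morphism $W \times W \to JC$, $(L_1,L_2) \mapsto L_1 \otimes L_2^{-1}$. This $D$ is a closed irreducible subvariety that contains the origin and has positive dimension (since $\varphi$ is non-constant).

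Next, I would invoke the standard fact on abelian varieties (see for instance \cite{bl}) that, for any closed irreducible subvariety $Y \subset JC$ containing the origin, the iterated sumsets $Y + \cdots + Y$ stabilize after finitely many steps to a closed abelian subvariety, namely the smallest one containing $Y$; this coincides with the subgroup of $JC$ generated by $Y$. Applied to $Y = D$, this produces a non-trivial abelian subvariety $A \subseteq JC$. Since $C$ is general, $JC$ is simple, so necessarily $A = JC$. Finally, the subgroup generated by $\varphi(W)$ contains $D$, hence contains $A$, and therefore equals $JC$.

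The only non-trivial input is the simplicity of $JC$ for a general curve of genus $g = 2a+1 \geq 5$, which is classical; consequently there is no substantive technical obstacle in this argument, merely the need to cite the appropriate results.
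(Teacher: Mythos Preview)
Your argument is correct and proves the lemma, but by a different route than the paper. The paper invokes a result of Fulton--Lazarsfeld \cite{fl} that the induced map $\varphi_*: H_1(W,\ZZ) \to H_1(JC,\ZZ)$ is surjective; since this map is the rational representation of $\tv: JW \to JC$, surjectivity of $\tv$ follows at once. Your approach via simplicity of $JC$ is more elementary in that it avoids the Lefschetz-type theorems for degeneracy loci and uses only standard structure theory of abelian varieties. Two small remarks are worth making. First, simplicity of $JC$ is strictly a ``very general'' condition (its failure locus in $\cM_g$ is a countable union of proper subvarieties, not a closed set), whereas the Fulton--Lazarsfeld input holds on a Zariski-open set; in this paper the distinction is immaterial, but it is worth being aware of. Second, the Fulton--Lazarsfeld argument in fact yields a little more, namely that $\ker\tv$ is connected (equivalently, that $\hat{\tv}$ is injective), which the paper uses immediately after the lemma when asserting that $\varphi^*$ is an embedding.
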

\begin{proof}
The embedding $\varphi: W \hookrightarrow  JC$ induces a morphism $\tv: JW \ra JC $.
It suffices to show that $\tv$ is surjective. It has been shown in \cite{fl}, that the induced map
$$
\varphi_*: H_1(W, \ZZ) \lra H_1(JC, \ZZ) \simeq H_1(C, \ZZ)
$$
is surjective. This map corresponds to the rational representation of $\tv$ and it determines it completely.
Hence, $\tv$ is surjective.
\end{proof}
Thus we have a short exact sequence
\begin{equation} \label{ses}
0 \lra K^1_{a+2}(C) \lra JW^1_{a+2}(C) \stackrel{\tv}{\lra} JC \lra 0,
\end{equation}
where $\tv$ is the map which takes a class of equivalence of divisors of degree zero in $W^1_{a+2}$ to its linear
equivalence class as a divisor on the curve $C$. The following result is proved in 
\cite[Theorem 1.1]{cht}.
\begin{thm} \label{cht}
 For a general curve $C$ af genus $g\geq 3$, the abelian variety $K^1_{a+2}(C)$ is connected and has no non-trivial endomorphisms which are rationally determined. 
\end{thm}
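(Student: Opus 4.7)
The theorem has two parts: (a) $K^1_{a+2}(C) = \ker(\tv)$ is connected, and (b) the only rationally determined endomorphisms of $K^1_{a+2}(C)$ are the integer multiples of the identity. The plan is to deduce both from properties of the monodromy of the universal Brill-Noether curve $\pi \colon \mathcal{W} \to \mathcal{M}_g^\circ$ over the Petri-general locus $\mathcal{M}_g^\circ \subset \mathcal{M}_g$.

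For part (a), the short exact sequence \eqref{ses} realizes $K^1_{a+2}(C)$ as the kernel of the surjection $\tv \colon JW \to JC$. Such a kernel is an extension of its component group by a connected abelian variety, and the component group is canonically identified with the cokernel of $\tv_* \colon H_1(JW,\ZZ) \to H_1(JC,\ZZ)$. This map is precisely $\varphi_* \colon H_1(W,\ZZ) \to H_1(C,\ZZ)$, which was shown to be surjective in the preceding lemma (citing \cite{fl}). Hence the component group vanishes and $K^1_{a+2}(C)$ is connected.

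For part (b), I would work in a family. Over $\mathcal{M}_g^\circ$ the local system $\cV := R^1\pi_*\QQ$ decomposes up to isogeny as $\cV \cong \cV_{JC} \oplus \cV_K$, where $\cV_{JC}$ is pulled back from the universal Jacobian and $\cV_K$ is the variation of Hodge structure associated to the family of kernels $K^1_{a+2}(C)$. A rationally determined endomorphism of the generic $K^1_{a+2}(C)$ extends to $\cV_K$ and must commute with the monodromy representation $\rho \colon \pi_1(\mathcal{M}_g^\circ) \to \mathrm{Sp}(\cV_K)$. It therefore suffices to show that $\rho$ is irreducible, so that its commutant in $\End_\QQ(\cV_K)$ is a division algebra; a genericity argument at a very general point of $\mathcal{M}_g^\circ$ then rules out CM and real-multiplication enhancements, forcing $\End(K^1_{a+2}(C)) \otimes \QQ = \QQ$.

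The main obstacle is establishing irreducibility of the monodromy on $\cV_K$. The plan is to produce enough explicit monodromy classes from degenerations of $C$ to carefully chosen boundary curves (e.g.\ irreducible nodal curves, or curves of compact type): the degeneration of $W^1_{a+2}$ yields controllable vanishing cycles in $H^1(W,\QQ)$. Under the mapping class group action these vanishing cycles sweep out a monodromy-invariant subspace; after splitting off the $\cV_{JC}$ summand via $\tv_*$, one must verify that the residual action on $\cV_K$ is transitive on a generating set of such cycles, and hence irreducible. This combinatorial/degeneration analysis on the Brill-Noether locus is the technical heart, and is where the Petri genericity hypothesis enters decisively.
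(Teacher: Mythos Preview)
This theorem is not proved in the present paper: immediately before the statement the author writes ``The following result is proved in \cite[Theorem 1.1]{cht}'', and it is quoted without argument. So there is no in-paper proof to compare your proposal against; what you have written is effectively a sketch of (part of) the Ciliberto--Harris--Teixidor argument.

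Your part (a) is correct and is exactly the content of the lemma preceding the theorem: surjectivity of $\varphi_*$ on $H_1(-,\ZZ)$ forces the cokernel, hence the component group of the kernel, to vanish.

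Your part (b) has the right overall shape---a rationally determined endomorphism of the family of kernels must commute with the monodromy representation on $\cV_K$, so the goal is to show that this representation has scalar commutant---but there is a genuine gap in your final step. Irreducibility of $\rho$ over $\QQ$ only yields, via Schur's lemma, that the commutant is a division algebra $D$; it does not by itself force $D=\QQ$. Your proposed remedy, ``a genericity argument at a very general point rules out CM and real-multiplication enhancements'', would require knowing that $\End(K^1_{a+2}(C))\otimes\QQ=\QQ$ for some particular $C$, which is at least as hard as the statement you are trying to prove and is not available anywhere in this paper. The argument in \cite{cht} does not proceed this way: it produces, via explicit degenerations of $C$, enough Picard--Lefschetz transvections in the monodromy image to force its Zariski closure to be the full symplectic group acting on $\cV_K$; once that is established the commutant is automatically $\QQ$. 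Your sketch gestures at such degenerations but only aims for irreducibility, and that weaker conclusion does not suffice.
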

By rationally determined we mean defined over the field of rational functions of $\cM_{g,1}$, the 
moduli space of smooth pointed curves of genus $g$.

Let us denote 
$\theta_C: JC \stackrel{\simeq}{\ra} \widehat{JC}$ (respectively $\theta_W$) the principal polarization of $JC$
(respectively that of $JW$).
By dualizing the exact sequence (\ref{ses}), we find that 
$$
\varphi^* = \theta_W^{-1} \circ \hat{\tv} \circ \theta_C : JC \ra JW,
$$
is an embedding since $\hat{\tv}$  is also one (see \cite[Prop. 2.4.2]{bl}). We shall show that the image of $\varphi^*$ defines an abelian subvariety 
of $JW$, which is a Prym-Tyurin variety for $W$. A polarized abelian variety $(P, \Xi)$ is a Prym-Tyurin 
variety for a curve $C$ if  there is an embedding $i_P: P \hookrightarrow JC$ such that $i_P^* \Theta \equiv e \Xi$; the integer $e$ is called the exponent of $P$.  
We will use Welters' criterion for Prym-Tyurin varieties (\cite[Theorem 12.2.2]{bl}).

\begin{thm} \label{Welters}(Welters' Criterion). 
Let $(P, \Xi)$ be a ppav of dimension $g$ and $C$ and smooth curve. Then $(P, \Xi)$ is a 
Prym-Tyurin variety of exponent $e$ for $C$ if and only if it exists a morphism $\phi: C \ra P$ such that \\
a) $\phi^*: P \ra JC$ is an embedding, \\
b) $\phi_*[C] = \frac{e}{(g-1)!} \bigwedge^{g-1}[\Xi]$ in $H^{2g-2}(P, \ZZ)$.
\end{thm}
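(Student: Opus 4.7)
The strategy is to translate the Prym--Tyurin condition $i_P^* \Theta_C \equiv e\,\Xi$ into an endomorphism identity on $P$, and then recognise condition (b) as the same identity in cohomological disguise via the Poincar\'e formula. Fix a basepoint $p_0 \in C$ and let $\alpha_C : C \hra JC$ be the associated Abel--Jacobi embedding, normalised so that $\alpha_C(p_0)=0$. Any morphism $\phi : C \ra P$ with $\phi(p_0)=0$ factors uniquely as $\phi = \bar\phi \circ \alpha_C$ for a homomorphism $\bar\phi : JC \ra P$. Using the polarization isomorphisms $\theta_C : JC \simeq \widehat{JC}$ and $\theta_\Xi : P \simeq \hat P$, one associates to $\phi$ the dual homomorphism
$$
\phi^* \; := \; \theta_C^{-1} \circ \hat{\bar\phi} \circ \theta_\Xi \; : \; P \lra JC.
$$
Condition (a) is exactly the injectivity of $\phi^*$, equivalently the surjectivity of $\bar\phi$; it expresses that $\phi(C)$ generates $P$ and realises $P$ as an abelian subvariety of $JC$.

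The first step is the formal equivalence
$$
(\phi^*)^* \Theta_C \equiv e\,\Xi \;\Longleftrightarrow\; \bar\phi \circ \phi^* = e\cdot \mathrm{id}_P,
$$
which follows at once from the definition of $\phi^*$ together with the identity $(\phi^*)^* \theta_C = \theta_\Xi \circ (\bar\phi \circ \phi^*)$ of homomorphisms $P \ra \hat P$. Granted (a), this reduces the Prym--Tyurin condition to the single endomorphism equation $\bar\phi \circ \phi^* = e \cdot \mathrm{id}_P$ on $P$.

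The second step is to match condition (b) with the same equation. The Poincar\'e formula gives $\alpha_{C,*}[C] = \frac{1}{(g-1)!}[\Theta_C]^{g-1}$ in $H^{2g-2}(JC,\ZZ)$, and pushing forward under $\bar\phi$ yields
$$
\phi_*[C] \; = \; \frac{1}{(g-1)!}\,\bar\phi_*\,[\Theta_C]^{g-1} \quad\text{in } H^{2g-2}(P,\ZZ).
$$
On the other hand, the minimal-class formula $[\Xi]^g/g! = [\mathrm{pt}]$ on $(P,\Xi)$ together with the projection formula for $\bar\phi$ and $\phi^*$ lets one compute $\bar\phi_*[\Theta_C]^{g-1}$ by pairing it with an arbitrary class in $H^2(P,\ZZ)$; the pairing is controlled by $(\bar\phi \circ \phi^*)^*[\Xi]$, and one reads off that $\bar\phi_*[\Theta_C]^{g-1} = e\,[\Xi]^{g-1}$ precisely when $\bar\phi \circ \phi^* = e \cdot \mathrm{id}_P$. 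Combining this with the displayed identity for $\phi_*[C]$ closes both directions of the theorem.

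The main obstacle is the second step: one has to keep careful track of the interplay between the cup-product and Pontryagin structures on $H^*(P)$, and of the behaviour of the theta classes under $\bar\phi_*$ and $(\phi^*)^*$, in order to extract the scalar $e$ cleanly from the minimal-class formula. Once this bookkeeping is in place, the two equivalences above fit together to yield the asserted if-and-only-if.
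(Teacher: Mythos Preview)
The paper does not prove this statement at all: Welters' Criterion is quoted verbatim from \cite[Theorem~12.2.2]{bl} and then applied as a black box in the next theorem. So there is no in-paper proof to compare your proposal against; the relevant benchmark is the argument in Birkenhake--Lange. Your overall architecture --- reduce the Prym--Tyurin condition to the endomorphism identity $\bar\phi\circ\phi^{*}=e\cdot\mathrm{id}_{P}$, then match this with condition~(b) via Poincar\'e --- is the right one and is essentially theirs, but your sketch has a concrete error and a genuine gap.

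The error is a conflation of dimensions. In the statement $g=\dim P$, whereas the genus of $C$, say $g_{C}$, is in general strictly larger. The Poincar\'e formula is $\alpha_{C,*}[C]=\tfrac{1}{(g_{C}-1)!}[\Theta_{C}]^{g_{C}-1}$ in $H^{2g_{C}-2}(JC,\ZZ)$, not with $g$; your displayed identity $\phi_{*}[C]=\tfrac{1}{(g-1)!}\bar\phi_{*}[\Theta_{C}]^{g-1}$ is therefore false as written, and the honest computation of $\bar\phi_{*}[\Theta_{C}]^{g_{C}-1}$ (integrating over the $(g_{C}-g)$-dimensional fibres) is exactly where the exponent $e$ has to appear. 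The gap is your ``one reads off'': you assert the biconditional in the second step without proof. The clean way through is not to pair with a single class but to observe that, under the principal polarisation $\Xi$, the classes $\phi_{*}[C]\in H^{2g-2}(P,\ZZ)$ and $(\phi^{*})^{*}[\Theta_{C}]\in H^{2}(P,\ZZ)$ are Lefschetz--dual, so that condition~(b) is literally the statement $(\phi^{*})^{*}[\Theta_{C}]=e\,[\Xi]$; this is what makes the reverse implication go. You also never explain, for the ``only if'' direction, how to manufacture $\phi$ from a given embedding $i_{P}\colon P\hra JC$: one composes the Abel map with the norm $JC\to P$ associated to the subvariety $i_{P}(P)$, and must then verify (a) and (b) for that specific $\phi$.
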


\begin{thm}
The Jacobian $JC$ is a Prym-Tyurin variety for $W$ of exponent the Catalan number
\begin{equation}\label{exp}
e = \frac{(2a)!}{a! (a+1)!}.
\end{equation}
\end{thm}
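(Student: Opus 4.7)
The plan is to verify Welters' criterion (Theorem~\ref{Welters}) for the morphism $\varphi\colon W \to JC$, $L \mapsto L \otimes L_0^{-1}$, taking $(P,\Xi) = (JC, \Theta)$, where $\Theta$ denotes the class of the principal polarization on $JC$. Condition~(a) --- that $\varphi^*\colon JC \to JW$ is an embedding --- is precisely what was established immediately before the theorem: dualizing the exact sequence~(\ref{ses}) gives $\varphi^* = \theta_W^{-1} \circ \hat{\tv} \circ \theta_C$, and $\hat{\tv}$ is an embedding because $\tv$ is surjective. So all the work is in condition~(b).

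The substance of the proof is to show that $\varphi_*[W] = \frac{e}{(g-1)!}\,\Theta^{g-1}$ in $H^{2g-2}(JC,\ZZ)$, where $e$ is the asserted Catalan number. The key observation is that $\varphi$ factors as the natural inclusion $W \hookrightarrow \Pic^{a+2}(C)$ followed by translation by $L_0^{-1}$; since translations act as the identity on the singular cohomology of an abelian variety, $\varphi_*[W]$ agrees with the fundamental class of the Brill--Noether locus $W^1_{a+2}(C)$ inside $\Pic^{a+2}(C) \simeq JC$. That class is given by the classical formula of Brill--Noether theory (see \cite[Ch.~VII]{acgh}),
$$
[W^r_d(C)] \;=\; \prod_{i=0}^{r} \frac{i!}{(g-d+r+i)!}\;\Theta^{(r+1)(g-d+r)},
$$
valid on a general curve whenever $\rho(g,r,d)\geq 0$. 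Specializing to $(r,d,g)=(1,a+2,2a+1)$ yields $g-d+r=a$ and $(r+1)(g-d+r)=2a=g-1$, so the product collapses to $[W] = \frac{1}{a!\,(a+1)!}\,\Theta^{g-1}$. Matching with $\frac{e}{(2a)!}\,\Theta^{g-1}$ forces $e = \frac{(2a)!}{a!\,(a+1)!}$, the Catalan number, and Welters' criterion then gives the conclusion.

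The main (in fact, the only substantive) step is the identification of $\varphi_*[W]$ with the Brill--Noether fundamental class followed by the invocation of the Castelnuovo-type formula; once this is in place the exponent drops out as a one-line numerical match. The point requiring genuine care is ensuring that the cycle-theoretic pushforward really does realize the Brill--Noether class with multiplicity one --- this rests on the fact that $\varphi$ is a translate of the tautological embedding and that $W^1_{a+2}(C)$ is smooth and reduced on a general $C$, which is guaranteed by the Petri hypothesis imposed at the outset of the section.
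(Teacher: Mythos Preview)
Your proof is correct and follows essentially the same route as the paper's own argument: both verify Welters' criterion by noting that condition~(a) was already established and then computing $\varphi_*[W]$ via the Brill--Noether class formula from \cite{acgh}, reading off the exponent $e = \frac{(2a)!}{a!(a+1)!}$ from the comparison with $\frac{e}{(g-1)!}\Theta^{g-1}$. Your write-up is in fact slightly more explicit about the translation-invariance step and the role of the Petri condition in ensuring the class is reduced, but these are expository refinements rather than a different approach.
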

\begin{proof}

We apply the Criterion \ref{Welters} to the embedding $\varphi^*: JC \ra JW$.
It suffices to show that $\varphi_*[W]$ has the required cohomology class. The class of the curve $W$ in 
$H^{2g-2}(\Pic^{a+2}(C), \ZZ)$ is given by (\cite[p. 320]{acgh})
\begin{equation} \label{classBN}
 [W^1_{a+2}(C)] =\frac{1}{a!(a+1)!} \bigwedge^{g-1} [\Theta_C].
\end{equation}
Hence
$$
\varphi_*[W] =\frac{1}{a!(a+1)!} \bigwedge^{g-1} [\Theta_C] =  \frac{e}{(2a)!} \bigwedge^{g-1} [\Theta_C]
$$
in $H^{2g-2}(JC, \ZZ)$.
\end{proof}

\section{A correspondence on the Brill-Noether curve }

We define the  following correspondence on the Brill-Noether curve $W$:
$$
 \gamma : L \mapsto \{ L' \in W \mid \mu: H^0(C, L) \otimes H^0( C, \omega_C\otimes (L')^{-1}) \ra H^0(C, \omega_C \otimes L\otimes 
 (L')^{-1}) \textrm{ is not injective} \},
$$ 
where  $\mu$ denotes the multiplication of sections.
It has been shown in \cite{fo} that this correspondence is non-empty for any $a \geq 2$.  
The correspondence $\gamma$ defines an endomorphism  (denoted by the same symbol) $\gamma \in \End(JW)$ by 
$$
\left[ \sum n_i L_i \right] \mapsto \left[ \sum n_i \gamma({L_i}) \right],
$$
where $L_i$ are points on the curve $W$ (corresponding to line bundles of degree $a+2$). 
Using the base-point-free-pencil trick, one checks that $L' \in \gamma(L)$ if and only if   
$H^0(C, \omega_C\otimes L^{-1} \otimes (L')^{-1}) \neq 0$. So,
we can rewrite the correspondence $\gamma$ as
$$
\gamma(L)= \{ L' \in W \mid H^0(C, \omega_C\otimes L^{-1} \otimes (L')^{-1}) \neq 0\}.
$$
From this description follows that  $\gamma$ is symmetric. Moreover, since $C$ is general the Gieseker-Petri Theorem 
(\cite[p. 215]{acgh}) ensures that the multiplication map  
$$
H^0(C,L) \otimes H^0(C, \omega_C \otimes L^{-1}) \ra H^0(C,  \omega_C)
$$
is injective for any line bundle $L \in W^1_{a+2}(C)$. Thus the correspondence $\gamma$ has no fixed points, i.e. 
$\gamma$ does not intersect the diagonal $\Delta \subset W \times W$. 
This also shows that the induced endomorphism of $JW$, is not a multiple of the identity, since these endomorphisms are induced by divisors of the form $n\Delta$, for $ n \in \ZZ$. 

For instance, for $a=2$ the 
correspondence $\gamma$ induces an involution on the curve $W$ of genus 11, namely $ \iota: L \mapsto \omega_C\otimes L^{-1}$.  It is known that the corresponding Prym variety associated to the \'etale double covering $W \ra W / \iota$ is an abelian subvariety of $JW$ of dimension 5 isomorphic to the Jacobian of $C$ (\cite{ma}).
 
\begin{lem} \label{castel}
Let $a \geq 3$ and $g=2a+1$.  The degree of the correspondence $\gamma$ is given by the Castelnuovo number
\begin{equation} \label{deg}
C(a-1, 3a-2, 2a+1) = \sum_{i=0}^{a-2} \frac{(-1)^i}{a+2} {a \choose a-2-i} {2a-i \choose a-1-i}.
\end{equation}
\end{lem}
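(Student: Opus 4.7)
The plan is to reduce the computation of $\deg \gamma$ to a classical secant plane count and then invoke Castelnuovo's formula. For a general $L \in W$, I first want to interpret the set $\gamma(L)$ geometrically. By the reformulation given in the paper, $L' \in \gamma(L)$ precisely when $H^0(C, \omega_C \otimes L^{-1} \otimes (L')^{-1}) \neq 0$, which means there exists a unique effective divisor $D$ on $C$ of degree
$$
(2g-2) - (a+2) - (a+2) = 2a - 4
$$
such that $L' = M(-D)$, where $M := \omega_C \otimes L^{-1}$. Note that $\deg M = 3a-2$ and, since $C$ satisfies Petri, $h^0(M) = a$, so $|M|$ is the complete linear series $g^{a-1}_{3a-2}$ residual to $|L|$.

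Next, I would rephrase the additional requirement $L' \in W^1_{a+2}(C)$, i.e. $h^0(M(-D)) \geq 2$, as a secancy condition: for general $L$ (so that $|M|$ gives an embedding, at least when $a \geq 4$, or a birational map with controlled singularities when $a=3$) the condition $h^0(M(-D)) \geq 2$ is equivalent to requiring that $D$ imposes at most $a-2$ conditions on $|M|$, which geometrically means $D$ is contained in a linear subspace of $\PP^{a-1}$ of dimension at most $a-3$. Thus $\gamma(L)$ is in bijection with the $(2a-4)$-secant $(a-3)$-planes to the image of $C$ in $\PP^{a-1}$, the assignment being $D \mapsto L' = M(-D)$. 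For general $L$ the correspondence is set-theoretic and transverse, so these bijections count with multiplicity one; this transversality (and the general position needed to avoid excess secants) is essentially guaranteed by the Gieseker-Petri theorem and the genericity of $C$.

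Finally, I would apply the classical Castelnuovo formula (\cite[Chapter VIII]{acgh}) for the number of multi-secant planes to a non-degenerate curve in projective space. Substituting $r = a-1$, $d = 3a-2$, $g = 2a+1$, $s = a-3$ (the dimension of the secant plane), the expected dimension count $\rho = 1 \cdot 1 = 1$ matches $\dim W = 1$, so the count is finite, and the Castelnuovo number evaluates to
$$
C(a-1, 3a-2, 2a+1) = \sum_{i=0}^{a-2} \frac{(-1)^i}{a+2} \binom{a}{a-2-i} \binom{2a-i}{a-1-i},
$$
as claimed.

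The main obstacle is the verification that, for $L$ general, the bijection between $\gamma(L)$ and the secant divisors is genuinely set-theoretic and without multiplicities, so that the enumerative Castelnuovo count equals the correspondence degree; in particular, one must rule out contributions from non-reduced divisors $D$ or from singularities of the image of $C$ in $\PP^{a-1}$. This requires checking that, under the hypothesis that $C$ satisfies Petri's theorem and $L$ is generic in $W$, no element $L'$ occurs with multiplicity greater than one and that the excess intersection from the diagonal (already excluded by Gieseker-Petri) and from degenerate secants does not contribute. Once these genericity statements are in place, the formula follows directly from the classical secant plane count.
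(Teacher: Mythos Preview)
Your proposal is correct and follows essentially the same route as the paper: set $M=\omega_C\otimes L^{-1}\in W^{a-1}_{3a-2}(C)$, rewrite $L'\in\gamma(L)$ as $L'=M(-D)$ with $D$ effective of degree $2a-4$ and $h^0(M(-D))\geq 2$, reinterpret this as $D$ imposing at most $a-2$ conditions on $|M|$, and then invoke the Castelnuovo secant formula for $(2a-4)$-secant $(a-3)$-planes. The paper handles your ``main obstacle'' (finiteness and transversality of the secant locus for general $C$) simply by citing \cite{f}, so you may replace your genericity discussion with that reference; your aside about ``$\rho=1\cdot1=1$ matches $\dim W=1$'' is not the relevant dimension count here and should be dropped.
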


\begin{proof}
Let $L \in W^1_{a+2}(C)$ and set $M=\omega_C \otimes L^{-1} \in W^{a-1}_{3a-2}(C)$. An element $L' \in W^1_{a+2}(C)$ is in 
$\gamma(L)$ if and only if $H^0(M\otimes (L')^{-1}) \neq 0$, that is, if $M\otimes (L')^{-1} = \cO (D) $ for an effective divisor $D$ of degree $2a-4$. 
So $L'$ is of the form 
$M(-D)$ with $h^0(M(-D))\geq 2$. Hence the degree is given by the degree of the degeneracy locus in $C_d$ 
(the $d$-symmetric power of $C$) 
of the divisors of degree $d=2a-4$ that impose at most $d-r = a-2$ conditions on $|M|$.
Thus one can interpret the degree of $\gamma$ as the number of the $(2a-4)$-secant 
$(a-3)$-planes in the linear system $|M|$. For the general curve there are finitely many of such
$(a-3)$-planes (\cite{f}) and their number is given by the Castelnuovo formula 
(\cite[Chapter VIII]{acgh}).
\end{proof}
 
 The endomorphism $\gamma$ also defines a map $W^1_{a+2}(C) \ra \Pic^m(C)$, where $m:= (a+2)(\deg\gamma)$
 by considering $\gamma(L)$ as a tensor product of line bundles on $C$. More precisely, if $D_i \in \Div^{2a-4}(C)$, for $i=1, \ldots, \deg \gamma$, are the secant divisors of the image of $C$ in 
$|\omega_C\otimes L^{-1}|^* \simeq \PP^{a-1}$, we set $L_i:=M(-D_i) \in  \Pic^{a+2}(C)$ and $\gamma(L)$ can be viewed as 
the line bundle
\begin{equation}\label{tensor}
 \bigotimes_{i=1}^{\deg \gamma} L_i  \in \Pic^m(C).
 \end{equation}

We denote $\cP$  the Zariski open subset consisting of all equivalence classes  $(C,x) \in  \cM_{g,1}$, 
with $C$ a curve having no non-trivial automorphisms and satisfying the Petri 
condition. There exists a smooth scheme $\cG^r_d$ and  a morphism
$\cG^r_d \ra \cP $ such that the fiber over any closed point $t \in \cP$ is isomorphic to $G^r_d(C)$, the variety 
parametrizing all the $g^r_d$'s on $C$. For $a\geq 4 $, set $r=1$, $d=a+2$ and $\cG:= \cG^1_{a+2}$. 
Now, let $\cH$ be the Hilbert scheme of curves of degree $3a-2$ and of genus $2a+1$ in $\PP^{a-1}$ 
and $\cH_1$ the open set of 
a component of $\cH$ with general moduli, parametrizing curves without nontrivial automorphisms.   
For every point $z=((C,x),D) \in \cG$ denote by $\Gamma \subset \PP^{a-1}$ the image of $C$ by the residual series    
$|\omega_C \otimes L^{-1}|$, with $L= \cO(D)$ and by $[\Gamma]$ the corresponding point in $\cH_1$. 
Let $\cH_z$ be a closed subset of $\cH_1$ given by the orbit of $[\Gamma]$ under the action of $PGL(a, \CC)$ (see \cite[\S 3]{c}). The map $\gamma: z \mapsto \gamma(L) $ induces a regular section
 $\cH_z \ra \Pic^m(C)$. By varying the curve $C$ we obtain a rationally determined line bundle $\cL$ on the universal family 
 $\cF$ over $\cH_1$,  such that the restriction of $\cL$ to the fiber over $z$ is isomorphic to $\gamma(L) \in \Pic^m(C)$, 
 where $\gamma(L)$ is the tensor product \eqref{tensor}. 
As a consequence of the Theorem \ref{cht} one has the following result 
(\cite[Theorem 1.2]{cht}).

\begin{thm}  \label{linebundles}
Let $\cH $ be any component of the Hurwitz scheme of coverings of $\PP^1$ of degree $d$ and genus $g\geq 3$
containing curves with general moduli and with $\rho=1$. Then the group of rationally determined line bundles of the 
universal family over $\cH$ is generated by the relative canonical bundle and the hyperplane bundle.
\end{thm}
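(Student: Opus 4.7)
The plan is to deduce this from Theorem~\ref{cht}. Given a rationally determined line bundle $\mathcal{L}$ on the universal curve $\cF\to\cH$, connectedness of $\cH$ forces its fiberwise degree to be a constant integer $m$. The relative canonical bundle $\omega_{\cF/\cH}$ and the hyperplane bundle (the pullback of $\mathcal{O}_{\PP^1}(1)$ under the degree-$d$ cover) have fiberwise degrees $2g-2$ and $d$, so after tensoring with appropriate integer powers of both one may reduce to the case $m=0$, once it is verified that $m\in(2g-2)\ZZ+d\ZZ$; this arithmetic point is a mild preliminary that must be checked separately. The reduced $\mathcal{L}$ then yields a rationally determined section $\sigma\colon\cH\to\Pic^0(\cF/\cH)$ of the relative Jacobian, and the aim becomes to show that $\sigma$ is fiberwise trivial.

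Restricting $\sigma$ to the fiber of $\cH\to\cM_g$ over a general point $[C]$ produces a rationally determined morphism $s_C\colon W^1_d(C)\to JC$, which by the universal property of the Albanese extends to a homomorphism $f_C\colon JW^1_d(C)\to JC$. The task is to show $f_C=n\tv$ for some $n\in\ZZ$; for then $s_C$ is, up to translation, $n$ times $\varphi$, so $\mathcal{L}|_C$ agrees with $L^n$ modulo a pullback from $\cH$. Using the isogeny decomposition $JW\sim\varphi^*(JC)\oplus K^1_d(C)$ afforded by $\tv\circ\varphi^*=e\cdot\mathrm{id}_{JC}$, the restriction of $f_C$ to $\varphi^*(JC)$ is an endomorphism of $JC$, which for general $C$ equals $n\cdot\mathrm{id}$ with $n\in\ZZ$, since $\End(JC)=\ZZ$. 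The remaining piece $f_C|_{K^1_d(C)}\colon K^1_d(C)\to JC$ is the term one must show to vanish.

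To kill this piece, compose $f_C|_{K^1_d}$ with $\varphi^*\colon JC\hookrightarrow JW$ and then with the rational projection $JW\to K^1_d$ provided by the isogeny decomposition (well defined over $\QQ$, or integrally up to multiplication by $e$). The outcome is a rationally determined endomorphism of $K^1_d(C)$, which Theorem~\ref{cht} forces to be zero. Hence $f_C|_{K^1_d}=0$, so $f_C=n\tv$; rational determinacy makes $n$ independent of $C$, and one concludes that $\mathcal{L}\otimes L^{-n}$ is fiberwise trivial on $\cF/\cH$, hence pulled back from $\cH$ and therefore trivial in the group generated by $\omega_{\cF/\cH}$ and the hyperplane bundle. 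The main obstacle is the careful bookkeeping of the rational-determinacy property through the isogeny decomposition: one must verify that inverting $e$ in the projection $JW\to K^1_d$ produces an endomorphism rationally determined in the precise sense required by Theorem~\ref{cht}, so that the vanishing statement genuinely applies. A subsidiary difficulty is justifying Step~1's arithmetic claim that $m$ belongs to $(2g-2)\ZZ+d\ZZ$.
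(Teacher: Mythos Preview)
The paper does not give its own proof of this theorem: it is quoted from \cite[Theorem~1.2]{cht} as a consequence of Theorem~\ref{cht}, with no argument supplied. So there is no proof in the paper to compare against, and your proposal is really an attempt to reconstruct the deduction carried out in \cite{cht}.

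Your overall architecture---reduce a rationally determined line bundle to a rationally determined homomorphism $f_C\colon JW\to JC$ and show it equals $n\tv$---is the right shape. But the step you offer for $f_C|_{K^1_d}=0$ is circular. You form the composition
\[
K^1_d \xrightarrow{\ f_C|_{K^1_d}\ } JC \xrightarrow{\ \varphi^*\ } JW \xrightarrow{\ \mathrm{proj}\ } K^1_d
\]
and plan to invoke Theorem~\ref{cht}. The projection $JW\to K^1_d$ in your isogeny decomposition is, by definition, the norm map for the complement of $\varphi^*(JC)$, so it annihilates $\varphi^*(JC)$ (over $\QQ$). Hence the last two arrows already compose to zero, and the endomorphism of $K^1_d$ you obtain is identically zero regardless of what $f_C|_{K^1_d}$ is. Nothing is proved, and this is not merely a bookkeeping issue with rational determinacy; the construction itself carries no information.

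The actual mechanism that forces $f_C|_{K^1_d}=0$ is different. For the general curve $\End_\QQ(JC)=\QQ$, and Theorem~\ref{cht} says the rationally determined endomorphism algebra of $K^1_d$ is $\QQ$ as well; an abelian variety with endomorphism algebra $\QQ$ is simple. Since $\dim K^1_d=g(W)-g\neq g=\dim JC$, the two are non-isogenous simple abelian varieties, whence $\Hom(K^1_d,JC)=0$. This is what replaces your circular composition. The residual points you flag---that $m\in(2g-2)\ZZ+d\ZZ$, and that the fiberwise-constant leftover $\mathcal{L}\otimes L^{-n}$ is a power of $\omega_C$---are genuine and need separate arguments (the second is essentially a Franchetta-type statement over $\cH$, not a formality).
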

 
It follows that there exist integers $\alpha,\beta$ such that 
\begin{equation}\label{le}
\gamma (L) = \omega_C^{\otimes \alpha} \otimes L^{\otimes \beta} \in \Pic^m(C). 
\end{equation}
We are able to deduce the coefficients $\alpha$ and $\beta$ as an application of Theorem \ref{prym}.
Set $P:=\im (1-\gamma) \subset JW$. On the light of the Theorem \ref{cht}, one does not expect other
subvarieties of $JW$, other that the obvious ones. More precisely, we prove: 

\begin{thm} \label{prym}
The subvariety $P$ is isomorphic to $JC$. In particular, $P$ is a Prym-Tyurin variety for $W$ of exponent $e$.
\end{thm}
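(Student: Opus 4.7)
The strategy is to identify $P$ with the subvariety $\varphi^*(JC) \subset JW$, since by the theorem concluding Section~2 this subvariety is already known to be isomorphic to $JC$ as a ppav and to be a Prym-Tyurin variety for $W$ of exponent $e$.

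First I would translate (\ref{le}) to the level of Jacobians. The identity $\gamma(L) = \omega_C^{\alpha}\otimes L^{\beta}$ in $\Pic^m(C)$ descends to
\[
\tilde\varphi\circ\gamma \;=\; \beta\cdot\tilde\varphi\colon JW\to JC,
\]
so $\gamma$ preserves $K^1_{a+2}(C) = \Ker\tilde\varphi$ and acts as $\beta$ on the quotient $JC$. Since $\gamma \subset W\times W$ is a symmetric correspondence, the induced endomorphism of $JW$ is self-adjoint under $\theta_W$; dualising the above identity yields $\gamma\circ\varphi^* = \beta\cdot\varphi^*$, so $\gamma$ acts as $\beta$ on $\varphi^*(JC)$ as well. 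Because the correspondence is defined uniformly in the moduli parameter, the induced endomorphism is rationally determined, and Theorem~\ref{cht} forces $\gamma|_{K^1_{a+2}(C)} = n \cdot \mathrm{id}$ for some integer $n$. With respect to the isogeny decomposition $JW \sim \varphi^*(JC) \oplus K^1_{a+2}(C)$, the endomorphism $\gamma$ is thus diagonal with eigenvalues $\beta$ and $n$, and $P = \im(1-\gamma)$ must coincide with one of $0$, $\varphi^*(JC)$, $K^1_{a+2}(C)$, or $JW$, according to which of $1-\beta,\,1-n$ vanishes.

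To identify the correct case I would apply the Lefschetz fixed-point formula on $W$: since Gieseker-Petri gives $\gamma \cap \Delta_W = \emptyset$, we have $\tr(\gamma^*|_{H^1(W)}) = 2\deg\gamma$, whence
\[
g\beta + (g(W)-g)\,n \;=\; \deg\gamma,
\]
with $\deg\gamma$ given by Lemma~\ref{castel}. Combined with the integrality constraint coming from the degree identity $(a+2)\deg\gamma = 4a\alpha + (a+2)\beta$ of (\ref{le}), the observation recorded in Section~3 that $\gamma$ is not a scalar on $JW$ (so $\beta \neq n$), and the non-triviality of $P$, the only consistent solution is $\beta = 1-e$ and $n = 1$. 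Then $1-\gamma$ vanishes on $K^1_{a+2}(C)$ and acts on $\varphi^*(JC)$ as the isogeny $e \cdot \mathrm{id}$, giving $P = \varphi^*(JC) \simeq JC$; the Prym-Tyurin structure of exponent $e$ is inherited from the earlier theorem.

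I expect the main obstacle to be this last elimination: the single linear Lefschetz identity in the two unknowns $\beta$ and $n$ admits infinitely many integer solutions, so some further input — either a second numerical identity (for instance analysing the multiplicity with which $\Delta_W$ appears in the composition $\gamma \circ \gamma$, to produce a trace relation for $\gamma^2$), or a stronger form of Theorem~\ref{cht} classifying the rationally determined abelian subvarieties of $JW$ — is needed to exclude the remaining possibility $P = JW$ and pin the answer down to $\varphi^*(JC)$.
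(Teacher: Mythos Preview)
Your plan---diagonalise $\gamma$ on the isogeny factors $\varphi^*(JC)$ and $K^1_{a+2}(C)$ via (\ref{le}) and Theorem~\ref{cht}, then decide which factor survives in $\im(1-\gamma)$---is sound, and your Lefschetz identity $g\beta+(g(W)-g)\,n=\deg\gamma$ is correct. The paper, however, organises the elimination differently and thereby sidesteps the underdetermined system you worry about.

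Rather than solving for $(\beta,n)$, the paper argues by cases on $\tv|_P$. If $\tv|_P\neq 0$, Theorem~\ref{cht} (together with the simplicity of $JC$ for general $C$) forces it to be an isogeny, whence $P=\varphi^*(JC)$ and we are done. If $\tv|_P=0$, then the $\Theta_W$-complement $Z$ of $P$ must equal $\varphi^*(JC)$ and is therefore Prym--Tyurin of exponent $e$; writing $Z=\im(e-1+\gamma)$ and applying the trace/dimension formula \cite[Cor.~5.3.10]{bl}---which is exactly your Lefschetz computation, specialised so that $e-1+\gamma$ plays the role of a norm-endomorphism---gives
\[
\dim Z=\frac{(e-1)g(W)+\deg\gamma}{e}.
\]
Equating this to $g=\dim JC$ and combining with Lemma~\ref{rel} yields $(e-2)g(W)=-2\deg\gamma$, impossible since $e\geq 2$ and $\deg\gamma>0$. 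So the ``second numerical identity'' you anticipated needing is precisely Lemma~\ref{rel}, but it is used to contradict the single bad case rather than to determine both eigenvalues simultaneously. Note also that the paper's logic runs opposite to yours at one point: the value $\beta=1-e$ is \emph{derived from} Theorem~\ref{prym} in the subsequent Theorem~\ref{sum}, so one should not try to establish it beforehand. Your residual concern about excluding $P=JW$ is absorbed in the paper into the terse appeal to Theorem~\ref{cht} in the nonzero case and is not treated as a separate obstacle.
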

\begin{proof}
Consider the map $\tv_{|P}: P \ra JC$ and suppose it is non-zero. Then by Theorem \ref{cht}, $\tv_{|P}$ is an isogeny.
The embedding $\varphi^*: JC \ra JW$ gives then an isomorphism $JC \simeq P $. In particular, $(P, \Theta_{C|_{P}})$
is a Prym-Tyurin variety of exponent $e$ for W.  If the restriction of $\tv$ to $P$ is zero, the complementary 
subvariety $Z$ of $P$ with respect to $\Theta_W$ is isogenous to $JC$, via the restriction $\tv_{|Z}: Z \ra JC $.
In this case $\varphi^*(JC) = Z$ and $Z$ is a Prym-Tyurin of exponent $e$ for $W$. Moreover,
$Z = \im (e-1 + \gamma)$. Using the formula in  \cite[Corollary 5.3.10]{bl}, one computes that 
$$
\dim Z = \frac{(e-1)g(W)  +  \deg \gamma}{e}.
$$
Since $\dim Z = \dim JC = g$, we have $(e-1)g(W)  + \deg \gamma =eg $. By Lemma \ref{rel} we obtain that
$$
(e-2)g(W) = - 2 \deg \gamma,
$$
which is a contradiction since $e\geq 2$ and $\deg \gamma > 0$.
Therefore $JC \simeq \im (1- \gamma)$.
\end{proof}

\begin{lem} \label{rel}
The equation $g(W) -\deg (\gamma) = e g$ holds.
\end{lem}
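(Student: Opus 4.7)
The plan is to verify the identity $g(W) - \deg\gamma = eg$ as a direct numerical consequence of the closed-form expressions \eqref{genus}, \eqref{deg}, and \eqref{exp}. First I observe that $eg = \tfrac{(2a)!}{a!(a+1)!}(2a+1) = \binom{2a+1}{a+1}$ and that \eqref{genus} can be rewritten as $g(W) = 1 + \tfrac{2a}{a+2}\binom{2a+1}{a+1}$.

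The central step is simplifying the Castelnuovo sum \eqref{deg}. After the substitution $j = i+2$ and the identity $\binom{2a+2-j}{a+1-j} = \binom{2a+2-j}{a+1}$, it becomes
$$
\deg\gamma \;=\; \frac{1}{a+2}\,\sum_{j=2}^{a} (-1)^j \binom{a}{j}\binom{2a+2-j}{a+1}.
$$
I would evaluate this by extending the range to $0 \le j \le a$ and applying a generating function argument: writing $\binom{2a+2-j}{a+1} = [x^{a+1}](1+x)^{2a+2-j}$, the completed sum collapses to
$$
\sum_{j=0}^{a} (-1)^j \binom{a}{j}\binom{2a+2-j}{a+1} \;=\; [x^{a+1}]\,(1+x)^{2a+2}\left(1-\tfrac{1}{1+x}\right)^{a} \;=\; [x^{a+1}]\,x^{a}(1+x)^{a+2} \;=\; a+2.
$$
Subtracting the $j=0$ contribution $\binom{2a+2}{a+1} = 2\binom{2a+1}{a+1}$ and the $j=1$ contribution $-a\binom{2a+1}{a+1}$ then yields the closed form
$$
\deg\gamma \;=\; 1 + \frac{a-2}{a+2}\binom{2a+1}{a+1}.
$$

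Combining the two formulas gives
$$
g(W) - \deg\gamma \;=\; \frac{2a - (a-2)}{a+2}\binom{2a+1}{a+1} \;=\; \binom{2a+1}{a+1} \;=\; eg,
$$
which proves the lemma for $a \ge 3$. The case $a = 2$ is not covered by \eqref{deg}, but there $\gamma$ is the involution $L \mapsto \omega_C \otimes L^{-1}$ noted in the introduction, so $\deg\gamma = 1$ and the identity $g(W) - \deg\gamma = 11 - 1 = 10 = 2 \cdot 5 = eg$ is immediate. The only genuine difficulty is the combinatorial evaluation of \eqref{deg}; the generating function trick above is the cleanest route, though in principle one could invoke a Chu--Vandermonde identity instead.
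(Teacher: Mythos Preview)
Your computation is correct. The paper's own proof reads in its entirety ``A direct computation'', so you have simply carried out in full the verification that the author left to the reader; your generating-function evaluation of the Castelnuovo sum and the separate treatment of $a=2$ are exactly the kind of direct check the paper has in mind.
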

\begin{proof}
A direct computation.
\end{proof}

\section{The equivalence class of the sum of secants to a curve}

For any line bundle $L \in W^1_{a+2}(C)$, consider the product
 $$
\gamma(L) = \bigotimes_{i=1}^{\deg \gamma} L_i
$$
as defined in \S 3.
\begin{thm} \label{sum}
 Let $C$ be a general smooth curve of genus $2a+1$. For any line bundle $L \in W^1_{a+2}(C)$ we have that
 $$
 \gamma(L)= \bigotimes_{i=1}^{\deg \gamma} L_i = \omega_C^{\alpha} \otimes L^{1-e},
 $$ 
 where $e=\frac{(2a)!}{a! (a+1)!}$ and $\alpha= \frac{a+2}{4a}( g(W)-1 - e(g-1))$.
 \end{thm}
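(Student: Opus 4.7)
The plan is to exploit the identity
\[
\gamma(L) = \omega_C^{\otimes \alpha}\otimes L^{\otimes \beta}
\]
furnished by Theorem~\ref{linebundles} (formula~\eqref{le}) and to pin down the two exponents separately: $\beta$ through an endomorphism/trace argument built on Theorems~\ref{prym} and~\ref{cht}, and $\alpha$ by comparing degrees at the end.

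First I would extract from~\eqref{le} the identity of morphisms
\[
\tv\circ\gamma \;=\; \beta\cdot\tv \colon JW \lra JC.
\]
Evaluated on a difference $[L]-[L']$ with $L,L'\in W$, the $\omega_C^{\alpha}$-contributions cancel and, $\tv$ being the Albanese extension of $\varphi$, one is left with $\beta(L-L')$. Because $\gamma$ is symmetric it is self-adjoint for the Rosati involution, and the dual identity reads $\gamma\circ\varphi^{*}=\beta\cdot\varphi^{*}$; hence $\gamma$ preserves $P=\varphi^{*}(JC)$ and acts on it as multiplication by $\beta$. The subvariety $K^1_{a+2}(C)=\ker\tv$ is also preserved by $\gamma$, and Theorem~\ref{cht} forces $\gamma|_{K^1_{a+2}(C)}$ to be a scalar $\delta\cdot\mathrm{id}$. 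If $\delta\neq 1$ then $1-\gamma$ would remain an isogeny on $K^1_{a+2}(C)$, contradicting the conclusion of Theorem~\ref{prym} that $\im(1-\gamma)=P$ has dimension $g<g(W)$; hence $\delta=1$. This is the delicate step, where the rigidity of Theorem~\ref{cht} and the Prym-Tyurin structure of Theorem~\ref{prym} interact crucially.

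With the action of $\gamma$ on $JW$ fully described, $\beta$ drops out of a trace comparison. The splitting just obtained gives $\tr_{\mathrm{an}}(\gamma) = g\beta + (g(W)-g)$. On the other hand, since $\gamma$ has no fixed points (its graph is disjoint from the diagonal of $W\times W$, as recorded just before Lemma~\ref{castel}), the Lefschetz trace identity for a symmetric correspondence of bidegree $(\deg\gamma,\deg\gamma)$ gives $\tr_{\mathrm{rat}}(\gamma)=2\deg\gamma$, whence $\tr_{\mathrm{an}}(\gamma)=\deg\gamma$. Equating the two expressions and using Lemma~\ref{rel} in the form $g(W)-\deg\gamma=eg$ yields $\beta=1-e$. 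Finally, $\alpha$ is read off by comparing degrees in $\gamma(L)=\omega_C^{\alpha}\otimes L^{1-e}$: this gives $(\deg\gamma)(a+2)=\alpha(2g-2)+(1-e)(a+2)$, and substituting $2g-2=4a$ together with $\deg\gamma=g(W)-eg$ produces the asserted value
\[
\alpha \;=\; \frac{a+2}{4a}\bigl(g(W)-1-e(g-1)\bigr).
\]
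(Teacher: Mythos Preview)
Your argument is correct and reaches the same conclusion, but by a genuinely different route than the paper's.

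The paper proceeds algebraically from the single Prym--Tyurin identity $(1-\gamma)^2=e(1-\gamma)$, equivalently $(1-e-\gamma)(1-\gamma)=0$ in $\End(JW)$. Pushing this relation to $JC$ via $\tv$ and using \eqref{le} turns it into the numerical equation $(1-\beta)(1-e-\beta)=0$; the case $\beta=1$ is excluded because it would force $\tv\circ(1-\gamma)=0$, contradicting the surjectivity of $\tv|_P$ established in Theorem~\ref{prym}. The value of $\alpha$ is then read off from degrees, exactly as you do.

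Your approach instead determines the full eigenspace picture of $\gamma$ on $JW$: you show $\gamma$ acts as $\beta$ on $P=\varphi^*(JC)$ (via the duality step) and as a scalar $\delta$ on $K^1_{a+2}(C)$ (via Theorem~\ref{cht}), pin down $\delta=1$ using Theorem~\ref{prym}, and then recover $\beta$ from a Lefschetz trace computation combined with Lemma~\ref{rel}. This is more structural---one sees explicitly how $\gamma$ sits with respect to the isogeny decomposition $JW\sim P\times K$---but it invokes strictly more machinery (Theorem~\ref{cht} for the $K$-factor and the holomorphic Lefschetz formula) than the paper needs. In fact, once you have $\gamma|_P=\beta\cdot\mathrm{id}$, the paper's identity $(1-\gamma)|_P=e\cdot\mathrm{id}_P$ already gives $\beta=1-e$ directly, so your trace detour, while valid, is avoidable.

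One small point worth tightening: in ruling out $\delta\neq 1$ you say this would contradict $\dim P=g<g(W)$. The actual contradiction is that $(1-\gamma)(K)=K$ would then lie inside $\im(1-\gamma)=P$, yet $P\cap K$ is finite (since $\tv|_P$ is an isomorphism and $K=\ker\tv$) while $\dim K=g(W)-g>0$. Your conclusion is right, but the stated inequality alone does not carry it.
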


 \begin{proof}
The norm-endomorphism corresponding to the subvariety $P \subset JW$ is $1-\gamma$. 
It satisfies  $(1-\gamma)^2 =e (1-\gamma)$, or equivalently,  the quadratic equation

\begin{equation} \label{eq}
(  1 - e - \gamma)(1-\gamma) = \gamma^2 +(e-2) \gamma -(e-1)=0
\end{equation}
on the Jacobian $JW$. Consider the projection  $\tv$ from $P= \im(1-\gamma)$ to $JC$. Fix
$M \in W^1_{a+2}(C)$.  Then, by (\ref{le}), there exists an integer $\beta$ such that
$$
\tv (1- \gamma)(L-M) = L \otimes M^{-1} \otimes L^{-\beta} \otimes M^{\beta} .
$$ 
Since the relation  (\ref{eq}) holds on $JC$ as well, we obtain
\begin{eqnarray*}
(  1-e -\gamma)(  L \otimes M^{-1} )^{\otimes 1-\beta} &= &(L \otimes M^{-1})^{\otimes (1-e)(1-\beta)} 
\otimes (L \otimes M^{-1})^{\otimes -(1-\beta)\beta}
\\
 &=& (L \otimes M^{-1})^{\otimes (1-\beta)(1-e - \beta) }\\
 &=& \cO  
\end{eqnarray*}
 for all $L \in W$. Therefore  $(1-\beta)(1-e + \beta)=0$. If $\beta=1$,  $\tv(1-\gamma) =0 $, which is a contradiction 
 to the fact that $\tv_{|P}$ is surjective. Hence $\beta = 1-e $.  In order to compute the value of $\alpha$ one
compares the degrees in the equation (\ref{le}) and uses Lemma \ref{rel}.
\end{proof}

For example, for a general line bundle $L\in W^1_{5}(C)$ on a curve $C$ of genus $7$, the
image of the map $\phi : C\ra |\omega_C \otimes L^{-1}|^* $  is a plane curve with 8 nodes. Let $p_i, q_i$, for $i=1, \ldots, 8$, denote the pre-images of the nodes. Set $M:= \omega_C \otimes L^{-1} \in W^2_7(C)$. Hence 
$$
\gamma(L)= \bigotimes_{i=1}^{8} M (-p_i-q_i)  \ \in \Pic^{40}(C).
$$
By the adjunction formula we have that
$$
\omega_C = M^4( - \sum_{i=1}^{8} p_i+q_i ),
$$
that is, 
$$
\bigotimes_{i=1}^8 M (-p_i-q_i) =  \omega_C^{ 5} \otimes L^{-4},
$$
which is predicted by Theorem \ref{sum} since the Catalan number is equal to 5.  A less trivial example is the case of 
a general curve $C$ of genus 9 embedded in $\PP^3$ by the linear system $|M|$, with $M \in W^3_{10}(C)$. The space curve
admits 43  4-secant lines, the genus of the curve $W^3_{10}(C)$ is $169$
and the exponent of the Prym-Tyurin variety is 14. Let us denote $D_i \in \Div^4(C)$ the corresponding divisors. By Theorem \ref{sum} we obtain $\alpha=21$, $\beta=-13$ and
$$
\bigotimes_{i=1}^{43} \cO(D_i) =  M^{30} \otimes \omega_C^{-8}.
$$

\begin{rem}
For a general curve $C$, the subring of $H^*(C_2, \QQ)$ generated by the fundamental classes
of algebraic cycles on $C_2$ is generated by the class of a fiber of the projection $\pi_1:C_2 \ra C$ 
and the diagonal (\cite[p. 359]{acgh}). In the situation of the Brill-Noether curve, such subring  of $H^*(W_2, \QQ)$ has an 
extra generator induced by the correspondence $\gamma \subset W \times W$. 
\end{rem}

\begin{rem} 
It would be interesting to study the properties of the curve $W,$ for instance determine its gonality or 
if it has a special Brill-Noether behavior.
\end{rem}

{\bf{Acknowledgements.}} I would like to thank to A. Beauville, C. Ciliberto, G. Farkas, E. Izadi, G.-P. Pirola, O. Serman and A. Verra for stimulating conversations. This research is partially supported by the Sonderforschungsbereich 647  ``Raum - Zeit - Materie''.

\end{document}